\numberwithin{equation}{section}
\newtheorem{thm}{Theorem}[section]
\newtheorem{defn}{Definition}[section]
\newtheorem{lem}[thm]{Lemma}
\newtheorem{prop}[thm]{Proposition}
\newtheorem{cor}[thm]{Corollary}
\def\ni{\noindent}
\def\N{\mathbb{N}}
\def\R{\mathbb{R}}
\title{\textbf{\sc On Chromatic Zagreb Indices of Certain Graphs}}
\author{Johan Kok}
\affil{\small Tshwane Metropolitan Police Department\\ City of Tshwane, Republic of South Africa \\ {\tt kokkiek2@tshwane.gov.za}}
\author{N. K. Sudev}
\affil{\small Department of Mathematics\\ Vidya Academy of Science \& Technology \\ Thrissur - 680501, India.\\ {\tt sudevnk@gmail.com}}
\author{U. Mary}
\affil{\small Department of Mathematics \\ Nirmala College for Women\\ Coimbatore, India.\\ {\tt marycbe@gmail.com}}
\date{}
\begin{document}
\maketitle

\begin{abstract}
Let $G$ be a finite and simple undirected connected graph of order $n\ge 1$ and let $\varphi$ be a proper vertex colouring of $G$. Denote $\varphi:v_i \mapsto c_j$ simply, $c(v_i) = c_j$.  In this paper, we introduce a variation of the well-known Zagreb indices by utilising the parameter $c(v)$ instead of the invariant $d(v)$ for all vertices of $G$. The new indices are called chromatic Zagreb indices. We study these new indices for certain classes of graphs and introduce the notion of chromatically stable graphs. 
\end{abstract}

\ni  \textbf{Keywords:} Zagreb index, chromatic Zagreb index, tree, caterpillar, thorn graph, chromatically stable graphs.
\vspace{0.2cm}

\ni \textbf{Mathematics Subject Classification:} 05C15, 05C55, 05D40.

\section{Introduction}

For general notation and concepts in graph and digraph theory, we refer to \cite{BM1,BLS,CL1,FH1, DBW} and for the terminology of graph colouring, we refer to \cite{CZ1,JT1,MK1}. Unless mentioned otherwise, a graph will be a finite and simple undirected  connected graph $G$ of order $n$. In this paper, trees are considered as a specialised subset of general simple connected graphs. 

Several structural properties of graphs such as its size (the number of edges), orientation, degree of vertices, minimum and maximum degree of the graphs etc. are studied in an extensive manner, in the context of many graph theoretic concepts and a vast amount of results and applications thereof are found in the literature.

The topological graph indices related to irregularity of a graph, namely the \textit{first Zagreb index} $M_1(G)$ and the \textit{second Zagreb index} $M_2(G)$, are the oldest irregularity measures studied. A new \textit{irregularity} of $G$ has been introduced in \cite{MOA1} as $irr(G)=\sum\limits_{uv\in E(G)}imb(uv),\ imb(uv)=|d(v)-d(u)|$.  In \cite{GHF1}, this new index was named the \textit{third Zagreb index} to conform with the terminology of chemical graph theory. 

Recently, another topological index called \textit{total irregularity} of graphs has been introduced in \cite{ABD1} as $irr_t(G)=\frac{1}{2}\sum\limits_{u,v \in V(G)}|d(u)-d(v)|$.  This parameter may be called the \textit{fourth Zagreb index}. The study of Zagreb indices is strongly dependent on the structural property of edges of the graph $G$ under consideration and subsequently the degree of the vertices of $G$. An interesting study on \textit{Zagreb coindices} has been done in \cite{TD1} and later an extended study on Zagreb coindices of graph operations has been done in \cite{ADH1}. In a sense, paper \cite{TD1} may be considered to be the initial study on irregularity on an \textit{absent} property which resembles the study of \textit{structural virtuality}. 

With the advancement of ICT derivatives and the creation of several virtual structures, studies on the graphs with virtual properties is becoming more and more important. Some relevant examples of real and salient properties, almost physical-virtual properties, are the magnetic field around electrically charged objects, gravitational force between objects with mass and radiation index of radio-active elements or heated elements. Replicated research which takes into account of parametrised properties could provide some insight into technology allocation to the vertices of a graph. The earliest studies in this direction would be the study of weighted graphs, graph labeling, graph colouring, signed graphs, set-labeling and others. 

We recall that if $\mathcal{C}= \{c_1,c_2,c_3,\dots,c_\ell\}$ is a set of distinct colours (or labels or weights), a \textit{proper vertex colouring} of a graph $G$, denoted $\varphi:V(G) \mapsto \mathcal{C}$, is a vertex colouring of $G$ such that no two distinct adjacent vertices of $G$ have the same colour. The set of all vertices of $G$ which have the colour $c_i$ is called the \textit{colour class} of $c_i$ in $G$. The cardinality of the colour class of a colour $c_i$ is said to be the \textit{strength} of that colour in $G$ and is denoted by $\theta(c_i)$.  

The cardinality of a minimum set of colours which allows a proper vertex colouring of $G$ is called the \textit{chromatic number} of $G$ and is denoted $\chi(G)$.  When a vertex colouring is considered with colours of minimum subscripts the colouring is called a \textit{minimum parameter} colouring. Unless stated otherwise, we consider minimum parameter colour sets throughout in this paper. 

\ni We also recall the first three Zagreb indices which are defined as follows (see \cite{GT1,BZ1,ZG1} for the notion of first two Zagreb indices and see \cite{ABD1,GHF1} for third Zagreb index).

\vspace{-0.75cm}

\begin{eqnarray*}
M_1(G) & = & \sum\limits_{i=1}^{n}d^2(v_i) = \sum \limits_{i=1}^{n-1}\sum\limits_{j=2}^{n} (d(v_i) + d(v_j));\ v_iv_j \in E(G),\\
M_2(G) & = & \sum \limits_{i=1}^{n-1}\sum\limits_{j=2}^{n} d(v_i)d(v_j);\ v_iv_j \in E(G),\\
M_3(G) & = & \sum \limits_{i=1}^{n-1}\sum\limits_{j=2}^{n}|d(v_i) - d(v_j)|;\ v_iv_j \in E(G).
\end{eqnarray*}

We define $M_1(K_1)= M_2(K_1)=M_3(K_1)=0$ as the default values. Note that the Zagreb indices are all functions of the degree of vertices of graph $G$.  For a given graph, the vertex degrees are invariants to determine the above mentioned Zagreb indices. 

Now, we proceed to define the colouring analogue of different Zagreb indices, namely  \textit{chromatic Zagreb indices}, by replacing the invariants $d(v_i)$ by the parameters $s$, where $c(v_i) = c_s,\ \forall\, v_i \in V(G)$. First, note that for any minimum parameter set of colours $\mathcal{C}$, $|\mathcal{C}| = \ell$, a graph $G$ has $\ell!$ minimum parameter colourings. Denote these colourings $\varphi_t(G)$, $1\le t \le \ell!$. Now, the variable chromatic Zagreb indices  can be defined as follows.

\vspace{-0.75cm}

\begin{eqnarray*}
M^{\varphi_t}_1(G) & = & \sum\limits_{i=1}^{n}s^2,\ c(v_i)=c_s,\\
&= & \sum\limits_{j=1}^{\ell}\theta(c_j)\cdot j^2,\ c_j \in \mathcal{C}, \\
M^{\varphi_t}_2(G)& = &\sum\limits_{i=1}^{n-1}\sum\limits_{j=2}^{n}(s\cdot k),\ v_iv_j \in E(G),\ c(v_i)=s,\ c(v_j)=k,\\
M^{\varphi_t}_3(G) & = & \sum\limits_{i=1}^{n-1}\sum\limits_{j=2}^{n}|s-k|,\ v_iv_j \in E(G),\  c(v_i)=s,\ c(v_j)=k,  
\end{eqnarray*}
where $1\le t \le \ell\,!$.

In view of the above notions, we define the minimum and maximum chromatic Zagreb indices as follows.

\vspace{-0.75cm}

\begin{eqnarray*}
M^{\varphi^-}_1(G) & = & \min\{M^{\varphi_t}_1(G): 1\le t \le \ell!\}, \\
M^{\varphi^+}_1(G) & = & \max\{M^{\varphi_t}_1(G): 1\le t \le \ell!\}, \\
M^{\varphi^-}_2(G) & = & \min\{M^{\varphi_t}_2(G): 1\le t \le \ell!\}, \\
M^{\varphi^+}_2(G) & = & \max\{M^{\varphi_t}_2(G): 1\le t \le \ell!\}, \\
M^{\varphi^-}_3(G) & = & \min\{M^{\varphi_t}_3(G): 1\le t \le \ell!\}, \\
M^{\varphi^+}_3(G) & = & \max\{M^{\varphi_t}_3(G): 1\le t \le \ell!\}.
\end{eqnarray*}
 
We also define $M^{\varphi^-}_2(K_1)= M^{\varphi^+}_2(K_1)=0$ and $M^{\varphi^-}_3(K_1)=M^{\varphi^+}_3(K_1)=1$ as the default values.

\section{Preliminary Results}

Note that there is no direct relationship between the degree $d(v_i)$ of a vertex $v_i$ and its colour $c(v_i)=c_j$ in $G$. Then, the following are some obvious observations on the three Zagreb indices of the basic graphs $K_1$ (or $P_1$), $K_2$ (or $P_2$), $P_3$ and $K_3$ (or $C_3$).

\begin{center}
\begin{enumerate}\itemsep0mm
\item[(i)] $M^{\varphi^+}_1(K_1)=M^{\varphi^-}_1(K_1)=1>0=M_1(K_1)$.
\item[(ii)] $M^{\varphi^+}_1(K_2) = M^{\varphi^-}_1(K_2)=5>2=M_1(K_2)$. 
\item[(iii)] $M^{\varphi^+}_1(P_3)=9>6= M_1(P_3)$ and $M^{\varphi^-}_1(P_3)=6=M_1(P_3)$.
\item[(iv)] $M^{\varphi^+}_1(K_3)=M^{\varphi^-}_1(K_3)=14>12=M_1(K_3)$.
\item[(v)] $M^{\varphi^+}_2(K_1)=M^{\varphi^-}_2(K_1)=0=M_2(K_1)$.
\item[(vi)] $M^{\varphi^+}_2(K_2)= M^{\varphi^-}_2(K_2)=2=M_2(K_2)$. 
\item[(vii)] $M^{\varphi^+}_2(P_3)=M^{\varphi^-}_2(P_3)=4=M_2(P_3)$.
\item[(viii)] $M^{\varphi^+}_2(K_3)=M^{\varphi^-}_2(K_3)=11<12=M_2(K_3)$. 
\item[(ix)] $M^{\varphi^+}_3(K_1)=M^{\varphi^-}_3(K_1)=1>0=M_3(K_1)$.
\item[(x)] $M^{\varphi^+}_3(K_2)=M^{\varphi^-}_3(K_2)=1>0=M_3(K_2)$.
\item[(xi)] $M^{\varphi^+}_3(P_3)=M^{\varphi^-}_3(P_3)=2=M_3(P_3)$.
\item[(xii)] $M^{\varphi^+}_3(K_3)=M^{\varphi^-}_3(K_3)=4>0=M_3(K_3)$. 
\end{enumerate}
\end{center}

The graphs of order $n\ge 4$ with maximum degree for all vertices and maximum number of edges and with maximum chromatic number are complete graphs. Therefore, some extremal bounds can be found for all graphs of order $n \ge 4$.

\begin{prop} 
For complete graphs $K_n,\ n\ge 4$ and for all $\varphi_t$ we have 
\begin{enumerate}\itemsep0mm
\item[(i)] $M^{\varphi_t}_1(K_n) < M_1(K_n)$.
\item[(ii)] $M^{\varphi_t}_2(K_n) < M_2(K_n)$.
\item[(iii)] $M^{\varphi_t}_3(K_n) > M_3(K_n)$.
\end{enumerate}
\end{prop}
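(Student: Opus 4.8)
The plan is to exploit the fact that $K_n$ forces a unique colour pattern. Since $\chi(K_n)=n$, a proper colouring with the minimum parameter set $\mathcal C=\{c_1,\dots,c_n\}$ must assign all $n$ colours, each to exactly one vertex; hence $\theta(c_j)=1$ for every $j$ and every $\varphi_t$, and all three chromatic Zagreb indices are independent of $t$. So it suffices to compare closed-form expressions. Since every vertex of $K_n$ has degree $n-1$, we have $M_1(K_n)=n(n-1)^2$, $M_2(K_n)=\binom n2(n-1)^2$, and $M_3(K_n)=0$ (all imbalances vanish).

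For the chromatic side I would evaluate the sums over the colour multiset $\{1,2,\dots,n\}$. First, $M^{\varphi_t}_1(K_n)=\sum_{j=1}^{n}j^2=\tfrac{n(n+1)(2n+1)}{6}$, so (i) is equivalent to $(n+1)(2n+1)<6(n-1)^2$, i.e. $4n^2-15n+5>0$, which holds for all $n\ge 4$ (the larger root is below $4$); this also explains why the inequality can fail for small $n$, consistent with the observations on $P_3$ and $K_3$. Next, using $\sum_{i<j}ij=\tfrac12\big[(\sum i)^2-\sum i^2\big]$ one gets $M^{\varphi_t}_2(K_n)=\tfrac{n(n-1)(n+1)(3n+2)}{24}$, while $M_2(K_n)=\tfrac{12\,n(n-1)^3}{24}$, so (ii) reduces to $(n+1)(3n+2)<12(n-1)^2$, i.e. $9n^2-29n+10>0$, again valid for $n\ge 4$. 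Finally, since each colour appears once, $M^{\varphi_t}_3(K_n)=\sum_{1\le i<j\le n}(j-i)=\sum_{d=1}^{n-1}d(n-d)>0=M_3(K_n)$, giving (iii).

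The only point requiring care is (ii): an edge-by-edge comparison does not work, because the edge coloured $\{n-1,n\}$ contributes $n(n-1)>(n-1)^2$, so one genuinely must sum over all $\binom n2$ edges before comparing. Once the closed forms are in hand, all three parts collapse to elementary polynomial inequalities in $n$, each checked by locating the roots of a quadratic; I do not anticipate any deeper obstacle.
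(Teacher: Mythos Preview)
Your proposal is correct. Parts (i) and (iii) coincide with the paper's argument: both compute the same closed forms and reduce (i) to the quadratic $4n^2-15n+5>0$, while (iii) is immediate from $M_3(K_n)=0$.

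Part (ii) differs genuinely. The paper proceeds by induction on $n$: it verifies the base case $n=4$ numerically ($35<54$), then computes the increments $M^{\varphi_t}_2(K_{k+1})-M^{\varphi_t}_2(K_k)=\tfrac12 k(k+1)^2$ and $M_2(K_{k+1})-M_2(K_k)=\tfrac12 k(4k^2-3k+1)$, and checks that the latter dominates for $k\ge 2$ by locating the root of $3x-5$. Your route is more direct: you evaluate $M^{\varphi_t}_2(K_n)$ in closed form via the identity $\sum_{i<j}ij=\tfrac12\big[(\sum i)^2-\sum i^2\big]$ and reduce the whole comparison to the single quadratic inequality $9n^2-29n+10>0$. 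This avoids the inductive scaffolding entirely and yields the explicit value $M^{\varphi_t}_2(K_n)=\tfrac{1}{24}n(n-1)(n+1)(3n+2)$ as a byproduct; the paper's incremental method, on the other hand, makes the growth of the gap with $n$ a bit more visible. Your remark that an edge-by-edge comparison fails (the edge coloured $\{n-1,n\}$ contributes $n(n-1)>(n-1)^2$) is a useful caveat that the paper does not spell out.
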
 
\begin{proof}
\textit{Part (i):} We have $M_1(K_n) = n(n-1)^2$ and $M^{\varphi_t}_1(K_n) = \sum\limits_{i=1}^{n}i^2 = \frac{1}{6}n(n+1)(2n+1)$ for all $\varphi_t$. Considering $x\in \R$ we need to find the positive root of the equation $x(x-1)^2 - \frac{1}{6}x(x+1)(2x+1)=0$.
Then, 
\begin{eqnarray*}
x(x-1)^2 - \frac{1}{6}x(x+1)(2x+1) & = & 0\\
\implies x(4x^2 - 15x + 5) & = & 0.\\
\end{eqnarray*}

\vspace{-0.6cm}

By approximation, positive $x \in (3.375; 3.5) \implies n\ge 4,\ n\in \N$. Hence, the result $M^{\varphi_t}_1(K_n) < M_1(K_n),\ n\ge 4$, for all colourings $\varphi_t$. 

\textit{Part (ii):} $M^{\varphi_t}_2(K_4)=\sum\limits_{j=1}^{3}\sum\limits_{i=2}^{4}j\cdot i=35< \frac{1}{2}\cdot4(4-1)^3=54=M_2(K_4)$.  Hence, the result holds for $K_n;\ n=4$.  We now proceed by mathematical induction. Assume the result holds for all $4\le n\le k$.  This implies that for $n=k$, we have $\sum\limits_{j=1}^{k-1}\sum\limits_{i=2}^{k}j\cdot i<\frac{1}{2}k(k-1)^3$, for all colourings $\varphi_t$. Now, consider the complete graph of order $n= k+1$.  Now, we have 

\begin{eqnarray*}
M^{\varphi_t}_2(K_{k+1}) & = & \sum\limits_{j=1}^{k}\sum\limits_{i=2}^{k+1}j\cdot i \\
& = & \sum\limits_{j=1}^{k-1}\sum\limits_{i=2}^{k}j\cdot i + (k+1)\sum\limits_{j=1}^{k}j \\
& = & M^{\varphi_t}_2(K_k) + \frac{1}{2}k(k+1)^2.
\end{eqnarray*}
 and 
\begin{eqnarray*}
M_2(K_{k+1}) & = & \frac{1}{2}(k+1)((k+1)-1)^3\\ 
& = & \frac{1}{2}(k+1)((k-1)+1)^3\\ 
& = & M_2(K_k) + \frac{1}{2}k(4k^2-3k+1).
\end{eqnarray*}

Hence, we must determine the value of $k$ for which the condition $\frac{1}{2}k(4k^2-3k+1)> \frac{1}{2}k(k+1)^2$ holds. For this, we determine the positive root of the equation $\frac{1}{2}x(4x^2-3x+1)- \frac{1}{2}x(x+1)^2= 0$, for real $x$ as follows.

\begin{eqnarray*}
\frac{1}{2}x(4x^2-3x+1)- \frac{1}{2}x(x+1)^2 & = & 0\\
\implies x^2(3x- 5) & = & 0\\
\implies x\in \left\lbrace 0, \frac{5}{3}\right\rbrace.
\end{eqnarray*}

Therefore,  $k\ge 4, k \in \N$ is valid and hence the result holds for $n=k+1$. Hence, by induction, the general result $M^{\varphi_t}_2(K_n)<M_2(K_n),\ n \ge 4$, for all colourings $\varphi_t$ follows. Also, note that we have already seen that the result is true for $n\le 3$.

\textit{Part (iii)}: Since $M_3(K_n)=0$ for all $n\ge 1$, the result is trivial.
\end{proof}

\ni The following theorem is the main result of this section. 

\begin{thm}\label{Thm-2.2}
For any simple connected graph $G$ of order $n\ge 4$ and for all colourings $\varphi_t$, we have
\begin{enumerate}\itemsep0mm
\item[(i)] $M^{\varphi_t}_1(G) < M^{\varphi_t}_1(K_n)$. 
\item[(ii)] $M^{\varphi_t}_2(G) < M^{\varphi_t}_2(K_n)$. 
\item[(iii)] $M^{\varphi_t}_3(G) < M^{\varphi_t}_3(K_n)$.
\end{enumerate}  
\end{thm}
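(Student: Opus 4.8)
The plan is to deduce all three inequalities from the single trivial fact that $G$ is a spanning subgraph of $K_n$, after first replacing the given colouring $\varphi_t$ of $G$ by a carefully chosen \emph{bijective} colouring $\tau$ of $V(G)$ onto $\{1,2,\dots,n\}$. (Strictness forces the hypothesis to be read as $G\neq K_n$; when $G=K_n$ all three relations are equalities, since then every minimum parameter colouring uses each of $c_1,\dots,c_n$ exactly once, so I would flag this caveat in the write-up.) So fix $\varphi_t$, let $C_1,\dots,C_\ell$ be its colour classes with $c(v)=c_j$ for $v\in C_j$, put $\theta(c_j)=|C_j|\ge 1$ and $\ell=\chi(G)\le n-1$, and set $a_0=0$ and $a_j=\theta(c_1)+\cdots+\theta(c_j)$, so that $a_\ell=n$. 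Define $\tau:V(G)\to\{1,\dots,n\}$ by mapping the vertices of $C_j$, in any order, bijectively onto the block $\{a_{j-1}+1,\dots,a_j\}$; informally, $\tau$ ``expands'' each colour class of $\varphi_t$ into a run of consecutive colours while keeping the classes in the same order.

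Next I would record two monotonicity facts. First, if $v\in C_j$ then $\tau(v)\ge a_{j-1}+1=\theta(c_1)+\cdots+\theta(c_{j-1})+1\ge j$, so $\tau(v)\ge s$ whenever $c(v)=c_s$. Second, if $uv\in E(G)$ with $u\in C_i$, $v\in C_j$ and $i<j$ (the classes differ, since $\varphi_t$ is proper), then $\tau(v)>\tau(u)$ and $\tau(v)-\tau(u)\ge (a_{j-1}+1)-a_i=\theta(c_{i+1})+\cdots+\theta(c_{j-1})+1\ge j-i$, so $|\tau(u)-\tau(v)|\ge|s-k|$ whenever $c(u)=c_s$, $c(v)=c_k$. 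Since all labels are positive, the first fact gives $\tau(v)^2\ge s^2$ and $\tau(u)\tau(v)\ge s\cdot k$ on every edge. Summing over $V(G)$, resp. over $E(G)$, then yields $M^{\varphi_t}_1(G)\le M^{\tau}_1(G)$, $M^{\varphi_t}_2(G)\le M^{\tau}_2(G)$, and $M^{\varphi_t}_3(G)\le M^{\tau}_3(G)$.

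It then remains to compare the $\tau$-indices of $G$ with those of $K_n$. As $\tau$ is a bijection onto $\{1,\dots,n\}$ we have $M^{\tau}_1(G)=\sum_{i=1}^n i^2=M^{\varphi_t}_1(K_n)$, and the inequality $M^{\varphi_t}_1(G)\le M^{\tau}_1(G)$ is strict because $\ell<n$ forces some $\theta(c_j)\ge 2$, hence $\tau(v)>s$ for at least one vertex $v$; this gives (i). For (ii) and (iii), observe that $\tau$ is also a minimum parameter proper colouring of $K_n$, so $M^{\tau}_2(K_n)=M^{\varphi_t}_2(K_n)$ and $M^{\tau}_3(K_n)=M^{\varphi_t}_3(K_n)$ (the $K_n$-values do not depend on the colouring chosen). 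Finally $E(G)\subsetneq E(K_n)$, and each edge of $K_n$ missing from $G$ contributes a strictly positive amount $\tau(u)\tau(v)\ge 1$ to $M_2$ and $|\tau(u)-\tau(v)|\ge 1$ to $M_3$; hence $M^{\tau}_2(G)<M^{\tau}_2(K_n)$ and $M^{\tau}_3(G)<M^{\tau}_3(K_n)$, and chaining the displayed inequalities proves (ii) and (iii).

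The one genuinely delicate point is the choice of $\tau$. Naive estimates — bounding an edge term $s\cdot k$ by $\ell(\ell-1)$, or a gap $|s-k|$ by $\ell-1$, together with the bound $|E(G)|\le\binom{n}{2}$ — are already too lossy once $\ell$ is close to $n$, so one cannot avoid exhibiting an explicit injection of $V(G)$ into $\{1,\dots,n\}$ that \emph{simultaneously} dominates the colour values vertexwise and the colour gaps edgewise; the block-expansion map does exactly that, after which the whole statement collapses to the inclusion $E(G)\subseteq E(K_n)$. (An induction on $n$ in the spirit of Proposition~2.1 would presumably also work for (ii), but seems much messier for (iii).)
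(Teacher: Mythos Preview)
Your argument is correct and cleaner than the paper's. The paper proceeds by iterated edge deletion: it verifies directly that $M^{\varphi^-}_1(K_n-e)$ and $M^{\varphi^+}_1(K_n-e)$ both lie strictly below $M^{\varphi_t}_1(K_n)$ (and argues analogously for $M_2$, $M_3$), then asserts that repeating this removal down to $G$ gives the result. Your approach bypasses the induction entirely by constructing a single bijection $\tau:V(G)\to\{1,\dots,n\}$ that dominates the given colouring both vertexwise ($\tau(v)\ge s$) and edgewise ($|\tau(u)-\tau(v)|\ge|s-k|$), after which all three inequalities reduce to the inclusion $E(G)\subsetneq E(K_n)$. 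The block-expansion map is the genuinely new idea here; it handles parts (i)--(iii) uniformly, whereas the paper treats each part with a separate (and sketchier) calculation and must implicitly track how the chromatic number changes along a chain of edge deletions. Your observation that strictness requires $G\neq K_n$ (equivalently $\ell<n$) is also a point the paper leaves tacit.
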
 
\begin{proof}
\textit{Part (i):} Let the minimum parameter set of colours of $K_n$ be $\mathcal{C}$, $|\mathcal{C}|=n$.  Consider any edge $e=uv\in E(K_n)$. Clearly, the minimum parameter set of colours for $G-e$ is given by $\mathcal{C}'$, where $|\mathcal{C}'|=n-1$. Since, up to isomorphism, all colourings $\varphi_t$ in $K_n-e$ are equivalent proper minimum parameter colourings, the value $M^{\varphi^-}_1(K_n-e)$ is obtained for $\varphi:u\mapsto c_1$, $\varphi:v\mapsto c_1$ and $\varphi:w\mapsto c_j$, $c_j\in \mathcal{C}'-\{c_1\}$, for all $w \in V(K_n-e)-\{u,v\}$.  Similarly, the value $M^{\varphi^+}_1(K_n-e)$ is obtained for $\varphi:u\mapsto c_{n-1}$, $\varphi:v\mapsto c_{n-1}$ and $\varphi:w\mapsto c_j$, $c_j\in \mathcal{C}' -\{c_{n-1}\} \forall\, w \in V(K_n-e)-\{u,v\}$.  Furthermore, $M^{\varphi^-}_1(K_n-e) = \sum\limits_{i=1}^{n-1}i^2 + 1^2<\sum\limits_{i=1}^{n-1}i^2 + n^2 = \sum\limits_{i=1}^{n}i^2 = M^{\varphi_t}_1(K_n)$. Similarly, $M^{\varphi^+}_1(K_n-e) = \sum\limits_{i=1}^{n-1}i^2 + (n-1)^2 < \sum\limits_{i=1}^{n-1}i^2 + n^2 = \sum\limits_{i=1}^{n}i^2 = M^{\varphi_t}_1(K_n)$.  Hence, $ M^{\varphi_t}_1(K_n-e) < M^{\varphi_t}_1(K_n)$ for all $\varphi_t$.  After removal of the appropriate edges from $K_n$ to obtain $G$ (which is always possible), the result follows.

\textit{Part (ii):} The result follows through similar reasoning as in Part (i). As $\frac{n}{2}(n-1)^2>\frac{n}{2}(n-1)$ for all $n\ge 3$, after appropriate minimum parameter colouring, corresponding to $M^{\varphi^+}_2(K_n-uv)$
we have $\frac{n}{2}(n-1)^2>\frac{n}{2}(n-1)$ for all $n\ge 4$. 

\textit{Part (iii):} The result follows through similar reasoning as in Part (i). We have $M^{\varphi_t}_3(K_n) = \sum\limits_{j=1}^{n-1}\sum\limits_{i=1}^{j}i = \sum\limits_{j=1}^{n-1}\frac{1}{2}j(j+1) = \frac{\ell}{6}(\ell^2 + 3\ell +2),\ \ell=n-1$, for all $\varphi_t$.  After appropriate proper minimum parameter colouring, we must show that corresponding to $M^{\varphi^-}_3(K_n-uv)$ and $M^{\varphi^+}_3(K_n-uv)$, the inequality $\sum\limits_{i=1}^{n-2}i<\sum\limits_{i=1}^{n-1}i$ holds and which is trivially true. Hence, after removal of the appropriate edges from $K_n$ to obtain $G$ (which is always possible), the result follows.
\end{proof}

\ni The following result is an immediate consequence of Theorem \ref{Thm-2.2}.

\begin{cor} 
If $G'$ is a subgraph of $G$ then we have
\begin{enumerate}\itemsep0mm
\item[(i)] $M^{\varphi_t}_1(G') < M^{\varphi_t}_1(G)$.
\item[(ii)] $M^{\varphi_t}_2(G') < M^{\varphi_t}_2(G)$.
\item[(iii)] $M^{\varphi_t}_3(G') < M^{\varphi_t}_3(G)$.
\end{enumerate}  
\end{cor}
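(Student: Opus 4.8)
The plan is to recycle the edge-deletion estimates already embedded in the proof of Theorem~\ref{Thm-2.2}, now with $K_n$ replaced by $G$. Since $G'$ is a subgraph of $G$, one can pass from $G$ to $G'$ through a finite chain
\[ G = H_0 \supset H_1 \supset \dots \supset H_m = G' \]
in which each step $H_r \to H_{r+1}$ deletes a single edge or a single isolated vertex. If every elementary step strictly decreases each of $M_1^{\varphi_t}$, $M_2^{\varphi_t}$, $M_3^{\varphi_t}$, then composing the resulting inequalities along the chain yields the corollary, so it suffices to analyse one step.

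For $M_2^{\varphi_t}$ and $M_3^{\varphi_t}$ I would fix a proper colouring used simultaneously on $H_r$ and $H_{r+1}$. Deleting an edge $e=uv$ with $c(u)=c_s$ and $c(v)=c_k$ removes exactly the summand $s\cdot k \ge 1$ from $M_2^{\varphi_t}$ and the summand $|s-k|$ from $M_3^{\varphi_t}$; since the colouring is proper, $s \ne k$, so both summands are strictly positive and both indices drop strictly. (Deleting an isolated vertex leaves these two indices unchanged, which is harmless provided $G'$ still carries at least one edge; the few degenerate cases are covered by the default values fixed in Section~1.)

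The delicate index is $M_1^{\varphi_t}$, and I expect it to be the main obstacle. Since $M_1^{\varphi_t}(H)=\sum_i s_i^2$ depends only on the colour multiset of $H$ and not on its edges, an edge deletion on its own need not decrease it --- indeed $M_1^{\varphi_t}(P_4)=M_1^{\varphi_t}(C_4)$ although $P_4$ is obtained from $C_4$ by an edge deletion --- so the strict inequality in part~(i) can only be harvested at a step where the chromatic number (equivalently, the minimum parameter palette) actually shrinks, from $\mathcal{C}$ to $\mathcal{C}'$ with $|\mathcal{C}'|=|\mathcal{C}|-1$, or at an isolated-vertex deletion, which removes a term $s^2 \ge 1$ outright. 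At such a palette-shrinking step I would argue exactly as in Part~(i) of Theorem~\ref{Thm-2.2}: the extremal colouring of the smaller graph realising $M_1^{\varphi^-}$ (resp.\ $M_1^{\varphi^+}$) uses only colour values at most $|\mathcal{C}|-1$, whereas every minimum parameter colouring of the larger graph is forced to use the value $|\mathcal{C}|$, and comparing the two sums of squares term by term produces the required strict drop. The whole difficulty is this bookkeeping: one must make sure the colouring families $\varphi_t$ of $H_r$ and $H_{r+1}$ are matched correctly, which is cleanest if part~(i) is phrased through $M_1^{\varphi^-}$ and $M_1^{\varphi^+}$ while tracking how $\chi$ evolves along the chain. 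Once that is in place, the remaining estimates are the routine arithmetic already displayed in Theorem~\ref{Thm-2.2}, and no new idea is required.
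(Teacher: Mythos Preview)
Your approach is exactly the one the paper intends: its entire proof reads ``Immediate from the proof of Theorem~\ref{Thm-2.2}'', i.e.\ replay the edge-deletion estimate with $K_n$ replaced by $G$ and iterate along a chain. So there is no methodological disagreement between you and the paper.

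The genuine gap is that you have already produced a counterexample to part~(i) and then tried to argue around it. You observe that $M_1^{\varphi_t}(P_4)=M_1^{\varphi_t}(C_4)$ (both equal $1^2+2^2+1^2+2^2=10$), with $P_4$ a proper subgraph of $C_4$; this \emph{refutes} the strict inequality in (i) outright. Your proposed rescue --- harvest the strict drop only at palette-shrinking or vertex-deleting steps of the chain --- cannot work here, because the one-step chain $C_4\supset P_4$ contains neither kind of step: $\chi(C_4)=\chi(P_4)=2$ and the vertex sets coincide. Hence the ``bookkeeping'' you flag as the whole difficulty is not merely delicate but impossible in this instance, and part~(i) as stated is false (it should read $\le$, with strictness only under an added hypothesis such as $\chi(G')<\chi(G)$ or $|V(G')|<|V(G)|$). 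The paper's one-line proof simply glosses over this; your more careful analysis exposes the flaw rather than repairs it. Your handling of (ii) and (iii) is fine, since any proper subgraph of a connected graph on at least two vertices must lack at least one edge of $G$, and each missing edge removes a strictly positive summand from both $M_2^{\varphi_t}$ and $M_3^{\varphi_t}$.
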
 
\begin{proof}
Immediate from the proof of Theorem \ref{Thm-2.2}.
\end{proof}

\section{Chromatic Zagreb Indices of Certain Graphs}

Since chromatic Zagreb indices are dependent on a proper minimum parameter colouring of a graph $G$ as well as the edges in the case of $M^{\varphi_t}_2(G)$ and $M^{\varphi_t}_3(G)$, the introductory study of graphs with known chromatic number and well-defined edge set is important. For trees we have the first general result.

\begin{thm}
For a finite tree of order $n\ge 4$, we have
\begin{enumerate}\itemsep0mm
\item[(i)] $n+3\le M^{\varphi^-}_1(T) \le M^{\varphi^+}_1(T)\le 4n- 3$.
\item[(ii)] $M^{\varphi^-}_2(T)=M^{\varphi^+}_2(T)=2(n-1)$.
\item[(iii)] $M^{\varphi^-}_3(T)=M^{\varphi^+}_3(T)= n-1$.
\end{enumerate}  
\end{thm}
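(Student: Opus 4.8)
The plan is to exploit the fact that every tree on $n\ge 2$ vertices is connected and bipartite, so $\chi(T)=2$ and a minimum parameter colouring uses exactly the colours $c_1$ and $c_2$. Writing the (unique) bipartition as $V(T)=X\cup Y$ with $|X|+|Y|=n$ and, by connectedness, $|X|,|Y|\ge 1$, there are precisely $2!=2$ minimum parameter colourings: the one sending $X\mapsto c_1$, $Y\mapsto c_2$, and the one obtained by swapping. In terms of colour strengths, these two colourings correspond to $\{\theta(c_1),\theta(c_2)\}=\{|X|,|Y|\}$. This reduces every minimum/maximum over the $\varphi_t$ to a choice between the two part sizes, and the whole theorem becomes a short computation.

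For parts (ii) and (iii) I would note that, since the colour classes are exactly $X$ and $Y$, every edge $uv\in E(T)$ satisfies $\{c(u),c(v)\}=\{c_1,c_2\}$. Hence each of the $n-1$ edges contributes $1\cdot 2=2$ to $M^{\varphi_t}_2(T)$ and $|1-2|=1$ to $M^{\varphi_t}_3(T)$, and these contributions do not depend on $t$. Summing gives $M^{\varphi_t}_2(T)=2(n-1)$ and $M^{\varphi_t}_3(T)=n-1$ for both colourings, so the minimum and maximum values coincide as claimed.

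For part (i) I would use the colour-class form of the first index: $M^{\varphi_t}_1(T)=\theta(c_1)\cdot 1^2+\theta(c_2)\cdot 2^2=\theta(c_1)+4\theta(c_2)=4n-3\,\theta(c_1)$, since $\theta(c_1)+\theta(c_2)=n$. As $\theta(c_1)$ ranges over $\{|X|,|Y|\}$, the index is minimised when $\theta(c_1)=\max\{|X|,|Y|\}$ and maximised when $\theta(c_1)=\min\{|X|,|Y|\}$. From $1\le\min\{|X|,|Y|\}\le\max\{|X|,|Y|\}\le n-1$ (both bounds coming from the fact that each class has at least one and at most $n-1$ vertices) we get $M^{\varphi^-}_1(T)=4n-3\max\{|X|,|Y|\}\ge 4n-3(n-1)=n+3$ and $M^{\varphi^+}_1(T)=4n-3\min\{|X|,|Y|\}\le 4n-3$, while $M^{\varphi^-}_1(T)\le M^{\varphi^+}_1(T)$ is immediate from $\min\le\max$. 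One can also record that all four bounds are attained simultaneously by the star $K_{1,n-1}$.

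There is no real obstacle in this argument. The only steps needing care are the structural preliminaries in the first paragraph, namely justifying that a minimum parameter colouring of a tree uses precisely $\{c_1,c_2\}$ and that the only such colourings are the two ``swap'' colourings (so the extrema over $\varphi_t$ genuinely reduce to choosing between $|X|$ and $|Y|$), and the observation that connectedness forces $1\le|Y|\le|X|\le n-1$, which is exactly what makes the bounds in (i) tight.
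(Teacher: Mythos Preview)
Your proposal is correct and follows essentially the same approach as the paper: both exploit $\chi(T)=2$ so that every edge contributes $1\cdot 2$ and $|1-2|$ to parts (ii) and (iii), and both identify the star $K_{1,n-1}$ as the extremal case for part (i). Your argument for (i) is actually more explicit than the paper's---you compute $M^{\varphi_t}_1(T)=4n-3\theta(c_1)$ and bound $\theta(c_1)$ directly, whereas the paper simply asserts that the star furnishes the extremal values---but the underlying idea is the same.
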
 
\begin{proof}
\textit{Part (i):} Since $\chi(T)=2$ and the star $K_{1,n-1}$ is the tree of order $n$ with maximum pendant vertices (leafs), it is obvious that $M^{\varphi^-}_1(K_{1,n-1})$ and $M^{\varphi^+}_1(K_{1,n-1})$ are the lower and upper bounds for the first chromatic Zagreb index of trees of order $n$.  Therefore, the result follows.

\textit{Part (ii):} A tree $T$ of order $n$ has exactly $n-1$ edges. For each edge $uv \in E(T)$, a term of the sum to determine $M^{\varphi^-}_2(T)$ is $2$. Hence the result.

\textit{Part (iii):} For each edge $uv \in E(T)$ an absolute term of the sum to determine $M^{\varphi^-}_3(T)$ is $1$. Hence the result.
\end{proof}

For a $r$-partite graph $K_{n_1,n_2,n_3,\dots,n_r}$, $r\ge 2$, and $n_1\le n_2\le n_3 \le \cdots \le n_r$ the next lemma follows directly from the respective definitions.

\begin{lem}\label{Lem-3.2}
For the $r$-partite graph $K_{n_1,n_2,n_3,\ldots,n_r},\ r\ge 2$ and $n_1\le n_2\le n_3 \le \ldots \le n_r$, we have
\begin{enumerate}\itemsep0mm 	 
\item[(i)] $M^{\varphi^+}_1(K_{n_1,n_2,n_3,\ldots,n_r})= \sum\limits_{i=1}^{r}n_i\cdot i^2
\text{and} \\
 M^{\varphi^-}_1(K_{n_1,n_2,n_3,\dots,n_r}) = \sum\limits_{i=0}^{r-1}n_{i+1}\cdot (r-i)^2.$
\item[(ii)] $M^{\varphi^+}_2(K_{n_1,n_2,n_3,\dots,n_r}) =   \sum\limits_{i=1}^{r-1}\sum\limits_{j=i+1}^{r}n_i n_j(i\cdot j) \text{and}\\
M^{\varphi^-}_2(K_{n_1,n_2,n_3,\dots,n_r})  = \sum\limits_{i=1}^{r-1}\sum\limits_{j=i+1}^{r}n_i n_j(r-i)\cdot (r-j)$. 
\item[(iii)] $M^{\varphi^+}_3(K_{n_1,n_2,n_3,\dots,n_r}) = \sum\limits_{i=1}^{r-1}\sum\limits_{j=i+1}^{r}n_i n_j(j-i) = M^{\varphi^-}_3(K_{n_1,n_2,n_3,\dots,n_r})$,

$\therefore M^{\varphi_t}_3(K_{n_1,n_2,n_3,\dots,n_r}) = \sum\limits_{i=1}^{r-1}\sum\limits_{j=i+1}^{r}n_i n_j(j-i) $. 
\end{enumerate}
\end{lem}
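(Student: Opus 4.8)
The plan is to reduce the lemma to an optimisation over permutations. Write $K:=K_{n_1,\dots,n_r}$ with parts $V_1,\dots,V_r$, $|V_i|=n_i$. First I would determine the minimum parameter colourings. Selecting one vertex from each part gives a copy of $K_r$, while assigning colour $c_i$ to all of $V_i$ is proper, so $\chi(K)=r$ and hence $\ell=r$ here. In any proper colouring with exactly $r$ colours every part is monochromatic: the $r$ selected vertices receive all $r$ colours, so a vertex of $V_i$ carrying a colour different from that of its representative would repeat the colour of the representative of some other part $V_j$, contradicting adjacency. Therefore the $\ell!=r!$ colourings $\varphi_t$ are exactly the assignments that give every vertex of $V_i$ the colour $c_{\sigma(i)}$, one for each permutation $\sigma$ of $\{1,\dots,r\}$.

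Second, I would rewrite the indices as functions of $\sigma$. Since there are exactly $n_i n_j$ edges joining $V_i$ to $V_j$ and each such edge contributes the same quantity, one gets
\begin{gather*}
M_1^{\varphi}(\sigma)=\sum_{i=1}^{r} n_i\,\sigma(i)^2,\qquad
M_2^{\varphi}(\sigma)=\sum_{1\le i<j\le r} n_i n_j\,\sigma(i)\sigma(j),\\
M_3^{\varphi}(\sigma)=\sum_{1\le i<j\le r} n_i n_j\,|\sigma(i)-\sigma(j)|.
\end{gather*}
The hypothesis $n_1\le\cdots\le n_r$ is precisely what makes the extremal $\sigma$ identifiable.

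Third, I would optimise over $\sigma$. For (i), the rearrangement inequality (applied to $n_1\le\cdots\le n_r$ and to the values $\sigma(1)^2,\dots,\sigma(r)^2$) shows $M_1^{\varphi}(\sigma)$ is maximised when $\sigma$ is the identity and minimised when $\sigma(i)=r+1-i$; substituting these gives the two stated formulas. For (ii), $M_2^{\varphi}$ is not a single inner product, so I would run an exchange argument instead: if $i<j$ but $\sigma(i)>\sigma(j)$, then transposing the colours of $V_i$ and $V_j$ changes $M_2^{\varphi}$ by $\big(\sigma(i)-\sigma(j)\big)\big(n_j-n_i\big)\sum_{k\ne i,j} n_k\sigma(k)\ge 0$; iterating, the identity is the maximiser and, by the mirror argument, $\sigma(i)=r+1-i$ the minimiser, yielding the stated forms. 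For (iii), the reversal $i\mapsto r+1-i$ leaves every difference $|\sigma(i)-\sigma(j)|$ unchanged and the identity gives $|\sigma(i)-\sigma(j)|=j-i$, which produces the value $\sum_{i<j} n_i n_j(j-i)$.

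The step I expect to be the real obstacle is (iii). In contrast to $M_1^{\varphi}$ and $M_2^{\varphi}$, the map $\sigma\mapsto M_3^{\varphi}(\sigma)$ need not change monotonically under the exchange moves used above, so the claimed equality $M^{\varphi^+}_3=M^{\varphi^-}_3$ is not automatic: one must show that this common value is simultaneously the maximum and the minimum over all permutations $\sigma$. The $\sigma$-invariance of the multiset $\{\,|\sigma(i)-\sigma(j)|:i<j\,\}$ is suggestive but not by itself enough, since in $M_3^{\varphi}$ a difference is weighted by the pair $(i,j)$ rather than by the difference itself; clarifying for which part sizes the constancy genuinely holds is where I would put the effort, whereas (i) and (ii) are routine once the permutation description is set up.
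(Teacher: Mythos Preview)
The paper offers no proof of this lemma beyond the sentence ``follows directly from the respective definitions,'' so your permutation set-up together with the rearrangement inequality for (i) and the exchange argument for (ii) is already considerably more than the paper provides, and those two parts are handled correctly. (One incidental remark: substituting $\sigma(i)=r+1-i$ into $M_2^{\varphi}$ gives $\sum_{i<j}n_in_j(r+1-i)(r+1-j)$, not the paper's $\sum_{i<j}n_in_j(r-i)(r-j)$, whose $j=r$ terms vanish; this looks like a typo in the statement rather than a flaw in your argument.)

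Your instinct about (iii) is exactly right, and the difficulty is not merely technical: the assertion $M_3^{\varphi^+}=M_3^{\varphi^-}$ is false in general. Take $r=3$ with $(n_1,n_2,n_3)=(1,2,3)$. Writing $M_3^{\varphi}(\sigma)=\sum_{i<j}n_in_j|\sigma(i)-\sigma(j)|$, the six permutations give the values
\[
14,\ 13,\ 17,\ 17,\ 13,\ 14
\]
for $\sigma=(1,2,3),(1,3,2),(2,1,3),(2,3,1),(3,1,2),(3,2,1)$ respectively, so $M_3^{\varphi^-}=13$ and $M_3^{\varphi^+}=17$, while the paper's formula $\sum_{i<j}n_in_j(j-i)$ yields $14$, the value at the identity (and at the reversal), which is neither extreme. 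What your argument in (iii) actually proves is only that the identity and the reversal give the same $M_3^{\varphi}$-value; the invariance you note of the multiset $\{|\sigma(i)-\sigma(j)|\}$ cannot be upgraded to constancy of the weighted sum, precisely because the weights $n_in_j$ depend on the pair and not on the difference. The equality in (iii) does hold when all $n_i$ are equal (this is the content of the subsequent Proposition in the paper), but not for arbitrary part sizes, so there is no proof to be found for (iii) as stated.
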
 

Because $n_i\in \N$ is arbitrary, any closed formulae do not exist for the results provided in Lemma \ref{Lem-3.2}. Instead, simplified sum formulae exist for the cases $n_i=n$ for all $1\le i\le r$. 

\begin{prop} 
For any complete $r$-partite graphs with $r\ge 2$ we have
\begin{enumerate}\itemsep0mm 
\item[(i)]  $M^{\varphi_t}_1(K_{\underbrace{n,n,n,\dots,n}_{r-entries}}) = \frac{n}{6}r(r+1)(2r+1)$. 
\item[(ii)]  $M^{\varphi_t}_2(K_{\underbrace{n,n,n,\dots,n}_{r-entries}}) =\frac{n^2}{2}\sum\limits_{i=2}^{r}i^2(i-1)$.
\item[(iii)]  $M^{\varphi_t}_3(K_{\underbrace{n,n,n,\dots,n}_{r-entries}}) = n^2\sum\limits_{i=1}^{r-1}i(r-1)$. 
\end{enumerate}
\end{prop}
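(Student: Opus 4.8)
The plan is to read off all three identities from Lemma~\ref{Lem-3.2} by specialising to the balanced case $n_1=n_2=\cdots=n_r=n$, after one preliminary structural remark. First I would note that in the graph $K_{\underbrace{n,n,\dots,n}_{r}}$ the symmetric group $S_r$ acts on the set of parts by graph automorphisms, and any such automorphism carries a minimum parameter colouring to another minimum parameter colouring with the same colour-class strengths and the same incidence pattern; consequently all $r!$ colourings $\varphi_t$ yield the same value of each chromatic Zagreb index, so $M^{\varphi^-}_k(K_{n,\dots,n})=M^{\varphi^+}_k(K_{n,\dots,n})=M^{\varphi_t}_k(K_{n,\dots,n})$ for $k\in\{1,2,3\}$. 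This is what legitimises stating the results uniformly in $\varphi_t$, and it means it suffices to evaluate any one of the formulas in Lemma~\ref{Lem-3.2}, say the $\varphi^{+}$ one, with every $n_i$ replaced by $n$.

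For part (i), Lemma~\ref{Lem-3.2}(i) gives $M^{\varphi_t}_1=\sum_{i=1}^{r}n\cdot i^2=n\sum_{i=1}^{r}i^2$, and substituting the closed form $\sum_{i=1}^{r}i^2=\tfrac16 r(r+1)(2r+1)$ completes it. For part (ii), Lemma~\ref{Lem-3.2}(ii) becomes $M^{\varphi_t}_2=n^2\sum_{i=1}^{r-1}\sum_{j=i+1}^{r}ij$; here I would interchange the order of summation so that the outer index $j$ is the larger one, evaluate the inner sum via $\sum_{i=1}^{j-1}i=\tfrac12 j(j-1)$, and collect the resulting terms as $n^2\sum_{j=2}^{r}\tfrac12 j^2(j-1)=\tfrac{n^2}{2}\sum_{i=2}^{r}i^2(i-1)$ after relabelling the summation index.

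For part (iii), Lemma~\ref{Lem-3.2}(iii) gives $M^{\varphi_t}_3=n^2\sum_{i=1}^{r-1}\sum_{j=i+1}^{r}(j-i)$, and the neatest way to collapse this to a single sum is to regroup the pairs $(i,j)$ with $1\le i<j\le r$ according to the value $d=j-i$ of their difference: for each $d\in\{1,\dots,r-1\}$ there are exactly $r-d$ such pairs and each contributes $d$, so the double sum equals $\sum_{d=1}^{r-1}d(r-d)$, i.e.\ $M^{\varphi_t}_3=n^2\sum_{i=1}^{r-1}i(r-i)$.

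There is no serious obstacle: once Lemma~\ref{Lem-3.2} is in hand the proposition is entirely a matter of evaluating finite sums. The only points that demand a little care are the automorphism observation, without which the identities for an unspecified $\varphi_t$ would not be justified, and the bookkeeping of summation limits and index shifts when passing from the double sums to the closed forms --- in particular the diagonal regrouping in part (iii), which delivers the advertised one-parameter expression directly rather than through the less transparent route $\sum_{i=1}^{r-1}i(r-i)=\tfrac12\sum_{k=1}^{r-1}k(k+1)$ that one gets by summing over $j$ first.
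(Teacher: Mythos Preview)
Your approach is essentially the paper's: both specialise the balanced case and collapse the resulting sums, the paper phrasing Part~(i) as $n\cdot M^{\varphi_t}_1(K_r)$ and doing the Part~(ii) bookkeeping edge-by-edge rather than via your summation interchange, while your preliminary $S_r$-automorphism remark makes explicit what the paper only asserts at the end (``all possible minimum parameter colourings are equivalent''). One point worth flagging: your Part~(iii) computation correctly yields $n^2\sum_{i=1}^{r-1}i(r-i)$, whereas the proposition as printed has $i(r-1)$ --- this is a typographical slip in the statement (the two disagree already at $r=3$, where the true value is $4n^2$), and your diagonal-regrouping argument gives the right answer; the paper's own proof sketch for (iii) merely says ``similar to Part~(ii)'' and does not actually arrive at either closed form.
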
 
\begin{proof}
\textit{Part (i):} It easily follows that, $M^{\varphi_t}_1(K_{\underbrace{n,n,n,\dots,n}_{r-entries}})=n\cdot M^{\varphi_t}_1(K_r)$.  Hence, $M^{\varphi_t}_1(K_{\underbrace{n,n,n,\dots,n}_{r-entries}}) = \frac{n}{6}r(r+1)(2r+1)$.

\textit{Part (ii):} Let $G$ be an $r$-partite graph. Then, it follows that the vertex set can be partitioned into $r$ non-empty subsets of non-adjacent vertices. Let $V(K_{\underbrace{n,n,n,\dots,n}_{r-entries}})=\bigcup\limits_{i=1}^{r}V_i$ and $|V_i|= n,\ \forall\, i$.  Hence, exactly $\frac{1}{2}r(r-1)$ distinct complete bipartite subgraphs $\langle V_i\cup V_j\rangle;\ 1\le i<j\le r$ are needed to account for all edges. Consider the minimum parameter colouring $\varphi$, with respect to which, we have $V_1\mapsto c_1,V_2\mapsto c_2, V_3\mapsto c_3,\ldots, V_r\mapsto c_r$.  Determining the partial sum terms sequentially for $1\le i<j\le r$, we have $n(\underbrace{1\cdot 2+1\cdot 2+\ldots +1\cdot 2}_{n-entries} + \underbrace{1\cdot 3+1\cdot 3+\ldots +1\cdot 3}_{n-entries}+\cdots + \underbrace{1\cdot r+1\cdot r+\ldots +1\cdot r}_{n-entries})+
n(\underbrace{2\cdot 3+2\cdot 3+\ldots +2\cdot 3}_{n-entries}+\ldots + \underbrace{2\cdot r+2\cdot r+\ldots +2\cdot r}_{n-entries})+ 
\ldots\ldots\ldots\ldots\ldots\ldots\ldots+
n(\underbrace{(r-1)\cdot r+(r-1)\cdot r+(r-1)\cdot r+\ldots +(r-1)\cdot r}_{n-entries})$. 

The total sum is simplified to $M^{\varphi}_2(K_{\underbrace{n,n,n,\dots,n}_{r-entries}}) =\frac{n^2}{2}\sum\limits_{i=2}^{r}i^2(i-1)$. As all possible minimum parameter colourings are equivalent, it can be noted that the general result $M^{\varphi_t}_2(K_{\underbrace{n,n,n,\dots,n}_{r-entries}}) =\frac{n^2}{2}\sum\limits_{i=2}^{r}i^2(i-1)$ follows.

Part (iii): The proof is similar to that in Part (ii). Note that the partial sum terms derived from the edges in $\langle V_i\cup V_j\rangle$ are $n\sum\limits_{\ell=1}^{n}|i-j|$ or $n\sum\limits_{\ell=1}^{n}(j-i)$. 
\end{proof}

\subsection{Chromatic Zagreb Indices of $m$-regular Thorn Graph}

Different properties of a path with pendant vertices (varying in number), attached to the path vertices were studied first in \cite{HS1} and in a series of papers following it. These graphs were later named \textit{caterpillars}. Later, as a generalisation of caterpillars, another type of graphs called \textit{thorn graphs} has been defined in \cite{IG1} as a graph, denoted by $G^\star$, obtained from a graph $G$ of order $n$ by attaching $p_i\ge 0,\ i=1,2,\ldots,n$, pendant vertices to the $i^{th}$ vertex of $G$. In this section, $p_i = m,\ m \in \N_0$ for all $i$.  Generally, if an edgeless graph exists, a \textit{thornless} thorn graph also exists (see \cite{IG1}). As a thorn graph is defined for $m=0$, the graph $G$ itself is a \textit{thornless} thorn graph, denoted by $G^\star_{m=0}$.

If the minimum parameter colour set $\mathcal{C} = \{c_1,c_2,c_3,\dots,c_\ell\}$ allows a proper colouring of $G$, then it follows that $M^{\varphi^-}_1(G) = \sum\limits_{i=1}^{\ell}\theta(c_i)\cdot i^2$, $c_i \in \mathcal{C}$ if $\theta(c_1)\ge \theta(c_2)\ge \cdots \ge \theta(c_\ell)$.  For the inverse map $f:c_i\mapsto c_{(\ell+1)-i}$, $1\le i \le \ell$ the value $M^{\varphi^+}_1(G)=\sum\limits_{i=1}^{\ell}\theta(c_i))\cdot(\ell+1- i)^2$ is obtained. These observations lead to the following general result.

\begin{thm} 
For a graph $G$ of order $n$, we have 
\begin{enumerate}\itemsep0mm
	\item[(i)] If $\theta(c_1)\ge \theta(c_2)\ge \ldots \ge \theta(c_\ell)$, where each $c_i \in \mathcal{C}$ allows $M^{\varphi^-}_1(G)$, then $M^{\varphi^-}_1(G^\star)= M^{\varphi^-}_1(G)+ 4m\cdot\theta(c_1) + m(n-\theta(c_1))$.
	\item[(ii)] For the map $f:c_i\mapsto c_{(\ell+1)-i}$, $1\le i \le \ell$; $M^{\varphi^+}_1(G^\star)=M^{\varphi^+}_1(G)+ m(\ell-1)^2\cdot\theta(c_1)+ m\ell^2(n-\theta(c_1))$.
	\item[(iii)] If $\theta'(c_1)\ge \theta'(c_2)\ge \ldots \ge \theta'(c_\ell)$, where each $c_i \in \mathcal{C}$ allows $M^{\varphi^-}_2(G)$, then $M^{\varphi^-}_2(G^\star)=M^{\varphi^-}_2(G)+ 2m\cdot\theta'(c_1) + \sum\limits_{c_i \in \mathcal{C}-\{c_1\}}mi\cdot\theta(c_i)$. 
	\item[(iv)] For the map $f:c_i\mapsto c_{(\ell+1)-i},\ 1\le i \le \ell$; $M^{\varphi^+}_2(G^\star)=M^{\varphi^+}_2(G)+m\ell(\ell-1)\cdot\theta'(c_1) + \sum\limits_{c_i \in \mathcal{C}-\{c_1\}}m\ell(\ell+1-i)\cdot\theta'(c_i)$. 
	\item[(v)] If $\theta''(c_1)\ge \theta''(c_2)\ge \ldots \ge \theta''(c_\ell)$, where each $c_i \in \mathcal{C}$ allows $M^{\varphi^-}_3(G)$, then $M^{\varphi^-}_3(G^\star) = M^{\varphi^-}_3(G)+ mn$. 
	\item[(vi)] For the map $f:c_i\mapsto c_{(\ell+1)-i}$, $1\le i \le \ell$, we have $M^{\varphi^+}_3(G^\star)=M^{\varphi^+}_3(G) + \lfloor \frac{\ell}{2}\rfloor \cdot \theta''(c_{\lceil \frac{\ell}{2}\rceil}) + \sum\limits_{i=1}^{\lfloor\frac{\ell}{2}\rfloor}(\ell-i)\cdot \theta''(c_i)+ \sum\limits_{i=\lceil \frac{\ell}{2}\rceil}^{\ell-1}i\cdot \theta''(c_{i+1})$  if $\ell$ is odd and $M^{\varphi^+}_3(G^\star)=M^{\varphi^+}_3(G) + \sum\limits_{i=1}^{\frac{\ell}{2}}(\ell-i)\cdot \theta''(c_i) + \sum\limits_{i=\frac{\ell}{2}}^{\ell-1}i\cdot \theta''(c_{i+1})$, if $\ell$ is even.
\end{enumerate}
\end{thm}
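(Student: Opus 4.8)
The plan is to reduce each of the six claims to a local computation at the pendant vertices. Write $N$ for the set of the $mn$ pendant vertices added to form $G^\star$; each $p\in N$ is adjacent to exactly one vertex of $G$ (its \emph{support}) and to nothing else, so $V(G^\star)=V(G)\cup N$ and $E(G^\star)=E(G)\cup\{pv:\ p\in N,\ v\ \text{the support of}\ p\}$. Since $\ell=\chi(G)\ge 2$, attaching pendants does not change the chromatic number, so $\chi(G^\star)=\ell$ and the minimum parameter colour set of $G^\star$ is again $\mathcal C$. Every minimum parameter colouring $\varphi$ of $G^\star$ restricts to a minimum parameter colouring $\varphi|_G$ of $G$, and conversely every such colouring of $G$ extends to $G^\star$ by giving each pendant any colour different from that of its support. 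Hence, for $k\in\{1,2,3\}$,
\[
M^{\varphi}_k(G^\star)=M^{\varphi|_G}_k(G)+\big(\text{total contribution of the pendant vertices / pendant edges}\big),
\]
and the extremal value of $M^{\varphi}_k$ over $G^\star$ is obtained by ranging over colourings of $G$ and, for each one, colouring the pendants extremally.

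Next I would record the local quantities. Fix $v\in V(G)$ with $c(v)=c_j$ and a pendant $p$ of $v$ with $c(p)=c_s$, $s\neq j$. For $M_1$ the added term is $s^2$; its minimum over $s\in\{1,\dots,\ell\}\setminus\{j\}$ is $1$ if $j\neq1$ and $4$ if $j=1$, and its maximum is $\ell^2$ if $j\neq\ell$ and $(\ell-1)^2$ if $j=\ell$. For $M_2$ the added term is $sj$, with minimum $j$ (take $s=1$) when $j\neq1$ and $2$ (take $s=2$) when $j=1$, and maximum $\ell j$ when $j\neq\ell$ and $\ell(\ell-1)$ when $j=\ell$. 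For $M_3$ the added term is $|s-j|$, whose minimum is always $1$ (a colour adjacent to $c_j$ exists because $\ell\ge2$) and whose maximum is $\max\{j-1,\ \ell-j\}$. Since the $m$ pendants of a fixed vertex are mutually interchangeable, in any extremal colouring they may be taken to all receive a common colour, so each vertex $v$ contributes $m$ times the appropriate local quantity.

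Assembling the six formulae is then bookkeeping. For (i), take the colouring $\varphi^-$ of $G$ realising $M^{\varphi^-}_1(G)$ with $\theta(c_1)\ge\cdots\ge\theta(c_\ell)$ and give every pendant its $M_1$-minimising colour: the $\theta(c_1)$ vertices coloured $c_1$ force their pendants to $c_2$ at cost $4m$ each, while the remaining $n-\theta(c_1)$ vertices let their pendants take $c_1$ at cost $m$ each, giving $M^{\varphi^-}_1(G)+4m\,\theta(c_1)+m(n-\theta(c_1))$. For (ii), apply the involution $f:c_i\mapsto c_{(\ell+1)-i}$ so the largest class now carries $c_\ell$, and take the $M_1$-maximising pendant colours (weight $(\ell-1)^2$ on the $c_\ell$-class, $\ell^2$ elsewhere). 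Parts (iii)--(iv) are the $M_2$ versions with the edge-term extrema above, and (v)--(vi) the $M_3$ versions; in (v) the pendant contribution is $mn$ irrespective of the colouring, which is why the statement is a clean equality, and in (vi) one sorts the colour classes by whether $j\le\lceil\ell/2\rceil$ (the $c_\ell$-end is farther, weight $\ell-j$) or $j>\lceil\ell/2\rceil$ (the $c_1$-end is farther, weight $j-1$), with the middle class contributing $\lfloor\ell/2\rfloor$ per pendant when $\ell$ is odd; re-indexing the ``far-from-$c_1$'' sum via $i\mapsto i+1$ produces exactly the two displayed cases.

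The step I expect to be the main obstacle is justifying that the extremal index of $G^\star$ is actually attained by a colouring whose restriction to $G$ is extremal for $M^{\varphi}_k(G)$. A priori, a colouring of $G$ that is sub-optimal for $M^{\varphi}_k(G)$ could, by shrinking the first (or last) colour class, license a strictly cheaper (or strictly larger) pendant assignment and so beat the greedy choice above — e.g. for $M^{\varphi^-}_1$, moving one vertex out of the class $c_1$ shifts the pendant cost by $3m$ in the favourable direction while raising $M_1(G)$ by only $j^2-1\ge 3$. Closing this gap requires an exchange/monotonicity estimate showing that the change in $M^{\varphi}_k(G)$ always dominates the $m$-scaled change in pendant cost; I would argue this class by class from the ordering of the $\theta$'s, and I would expect the stated equalities to be safe for small $m$, with the general case demanding that this trade-off be verified with care (or an extra hypothesis imposed).
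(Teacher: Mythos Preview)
Your approach is essentially identical to the paper's: both start from an extremal colouring of $G$, extend it to the pendants by the locally optimal choice (pendants of $c_1$-vertices get $c_2$, all others get $c_1$, and the analogous rules for the other five parts), and then sum the contributions class by class. The local computations you record are exactly the ones the paper uses, and your handling of the odd/even split in (vi) via the mid-colour matches the paper's argument.

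The one substantive difference is that the paper does \emph{not} address the point you raise in your final paragraph. It simply exhibits the greedy extension of the assumed extremal colouring of $G$, computes its value, and declares that value to be $M^{\varphi^\pm}_k(G^\star)$; there is no exchange argument, no monotonicity estimate, and no discussion of whether a sub-optimal colouring of $G$ could trade off against a cheaper pendant assignment. So your worry is real, but the paper takes it on faith (effectively reading the theorem as ``under the stated extremal colouring of $G$, extended greedily to the pendants, the index of $G^\star$ is as given''). If your aim is to reproduce the paper's proof at its own level of rigour, you may omit that last paragraph; if your aim is a fully watertight argument, the gap you identified is genuine and is not closed in the source.
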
 
\begin{proof}
\textit{Part (i):} Clearly, we have $M^{\varphi^-}(G^\star_{m=0}) = M^{\varphi^-}_1(G^\star)$.  For $m\ge 1$, colour the pendant vertices attached to the $\theta(c_1)_G$ vertices by the colour $c_2$.  Colour all the pendant attached to the other $(n-\theta(c_1))_G$ vertices by the colour $c_1$.  From the definition of $M^{\varphi^-}_1(G^\star)$ it follows that we add the minimum sum terms $m\cdot \theta(c_1)\cdot 2^2$ as well as $m(n-\theta(c_1))\cdot1^2$. Therefore the result follows.

\textit{Part (ii):} The result follows similar to Part (i). Note that the $\theta(c_1)$ vertices in $G$ now coloured $c_\ell$ will have pendant vertices all coloured $c_{\ell-1}$. All other vertices in $G$ will have all pendant vertices coloured $c_\ell$. 

\textit{Part (iii):} For a minimum parameter colouring the same as Part (i) and applying the definition of $M^{\varphi^-}_2(G^\star)$, the result follows.

\textit{Part (iv):} For a minimum parameter colouring the same as Part (ii) and applying the definition of $M^{\varphi^+}_2(G^\star)$, the result follows.

\textit{Part (v):} For the vertices in $G$ coloured $c_1$ the $m$ pendant vertices can be coloured $c_2$.  Similarly, for the vertices in $G$ coloured $c_\ell$ the pendant vertices can be coloured $c_{\ell-1}$. For the vertices in $G$ coloured $c_j$, $2\le j\le \ell-1$ the pendant vertices can be coloured $c_{j-1}$ or $c_{j+1}$. Therefore the sum terms resulting from the thorn-edges are all equal to 1. The latter implies the result.

\begin{enumerate}\itemsep0mm
\item[(a)] Let $\ell$ be odd. The unique \textit{mid-colour} is $c_{\lceil\frac{\ell}{2}\rceil}$. Colour all the pendant vertices of vertices in $G$ having mid-colouring $c_{\lceil\frac{\ell}{2}\rceil}$, the colour $c_1$ or $c_\ell$ to ensure the maximum sum term $\theta''(c_{\lceil \frac{\ell}{2}\rceil})\cdot|\lceil\frac{\ell}{2}\rceil - 1|$ or $\theta''(c_{\lceil \frac{\ell}{2}\rceil})\cdot|\lceil\frac{\ell}{2}\rceil - \ell|$.  Colour all the pendant vertices of the vertices $\theta''(c_i)$, $1\le i \le \lfloor\frac{\ell}{2}\rfloor$ which are now carry inverse colouring, with the colour $c_\ell$ to ensure maximum sum terms in respect of the definition of $M^{\varphi^+}_3(G^\star)$.  Also colour all the pendant vertices of the vertices $\theta''(c_i)$, $\lceil\frac{\ell}{2}\rceil\le i \le \ell-1 $ which now have inverse colouring, with the colour $c_1$ to ensure maximum sum terms in respect of the definition of $M^{\varphi^+}_3(G^\star)$.  Through total summation the result follows.

\item[(b)] Let $\ell$ be even. The proof is similar to (a) noting that no unique \textit{mid-colour} exists.
\end{enumerate}
\vspace{-0.75cm}
\end{proof}

\section{Chromatically Stable Graphs}

Generally, a graph $G$ is said to be \textit{critical} with respect to an invariant if the removal of any element changes the invariant. Note that $M^{\varphi_t}_1(G)$ is \textit{vertex critical}. Furthermore, both $M^{\varphi_t}_2(G)$ and $M^{\varphi_t}_3(G)$ are critical indices (\textit{element critical}). Hence, for a connected graph $G$ of order $n$ and a spanning tree $T$ of $G$, we have minimal $M^{\varphi_t}_2(T),$ $M^{\varphi_t}_3(T)$. That is,  $M^{\varphi_t}_2(T)\le M^{\varphi_t}_2(G)$ and $M^{\varphi_t}_3(T)\le M^{\varphi_t}_3(G)$.  

\ni The following theorem is perhaps stating the obvious, but important to formalise.

\begin{thm}
Consider finite positive integer $n$. Let $\mathfrak{G}$ denotes the set of all unlabeled, connected, simple graphs of order $n$, $\mathcal{T}_G$ be the set of all unlabeled spanning trees of a given graph $G\in \mathfrak{G}$ and $\mathfrak{T}$ be the set of all unlabeled trees of order $n$. Then, $\mathfrak{T}=\bigcup\limits_{G\in \mathfrak{G}}\mathcal{T}_G$.
\end{thm}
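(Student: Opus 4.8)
The plan is to prove the set equality $\mathfrak{T}=\bigcup_{G\in\mathfrak{G}}\mathcal{T}_G$ by establishing the two inclusions separately, as is standard for such statements. Throughout I will use the basic fact that a spanning tree of a connected graph $G$ of order $n$ is a connected acyclic subgraph on the same $n$ vertices, hence is itself a tree of order $n$; and conversely that every connected graph of order $n$ contains at least one spanning tree (obtained, e.g., by iteratively deleting edges lying on cycles until none remain).

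For the inclusion $\bigcup_{G\in\mathfrak{G}}\mathcal{T}_G\subseteq\mathfrak{T}$, I would take an arbitrary $G\in\mathfrak{G}$ and an arbitrary $T\in\mathcal{T}_G$. By definition $T$ is a spanning tree of $G$, so $T$ is a connected acyclic simple graph whose vertex set is $V(G)$; since $|V(G)|=n$ we get $|V(T)|=n$, and therefore $T$ (viewed up to isomorphism, i.e.\ as an unlabeled graph) lies in $\mathfrak{T}$. This direction is immediate.

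For the reverse inclusion $\mathfrak{T}\subseteq\bigcup_{G\in\mathfrak{G}}\mathcal{T}_G$, let $T\in\mathfrak{T}$ be an arbitrary unlabeled tree of order $n$. A tree is in particular a connected simple graph of order $n$, so $T\in\mathfrak{G}$; and $T$ is trivially a spanning tree of itself (it is a connected acyclic spanning subgraph of $T$), so $T\in\mathcal{T}_T\subseteq\bigcup_{G\in\mathfrak{G}}\mathcal{T}_G$. Combining the two inclusions gives the claimed equality.

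I do not anticipate a genuine mathematical obstacle here — the statement is, as the authors note, essentially a formalisation of the obvious. The only point requiring mild care is the bookkeeping around the word ``unlabeled'': one should check that the two notions of ``the same tree'' (an isomorphism class of abstract trees, versus an isomorphism class of spanning subgraphs) coincide, so that the union on the right-hand side is genuinely taken in the same ambient set $\mathfrak{T}$ in which the left-hand side lives. This is routine once one observes that isomorphism of spanning subgraphs of graphs in $\mathfrak{G}$ is exactly graph isomorphism of the underlying trees, so the identification causes no loss. With that observation recorded, the argument above is complete.
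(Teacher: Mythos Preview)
Your proof is correct. The two inclusions are handled cleanly, and the observation that any tree $T$ of order $n$ is itself a member of $\mathfrak{G}$ and is trivially its own spanning tree gives the nontrivial inclusion $\mathfrak{T}\subseteq\bigcup_{G\in\mathfrak{G}}\mathcal{T}_G$ in one line.

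The paper's argument is different in two respects. First, it spends the opening paragraph establishing that the sets $\mathfrak{G}$, $\mathcal{T}_G$, $\mathfrak{T}$ are finite (invoking the count $n\cdot 2^{\binom{n}{2}}$ for labeled connected graphs and Cayley's formula $n^{n-2}$), although finiteness plays no role in the set equality itself. Second, for the inclusion $\mathfrak{T}\subseteq\bigcup_{G}\mathcal{T}_G$ the paper argues by contradiction: assuming some $T\in\mathfrak{T}$ is not a spanning tree of any $G\in\mathfrak{G}$, it adds an edge between two non-adjacent vertices of $T$ to obtain a graph $G'\in\mathfrak{G}$ having $T$ as a spanning tree. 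Your direct argument (take $G=T$) is shorter, avoids the mild edge case that a tree may have no pair of non-adjacent vertices (e.g.\ $T=K_2$), and makes the finiteness discussion unnecessary. The paper's route buys nothing extra mathematically; your version is the more economical one.
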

\begin{proof}
If $n=1$. the result is trivially true. Hence, assume $n\ge 2$. The number of labeled, connected, simple graphs on finite $n$ vertices is given by $n\cdot2^{\binom{n}{2}}$ (\cite{HP1}). Therefore, the number of non-isomorphic (unlabeled) connected, simple graphs of order $n$ is finite hence, $|\mathfrak{G}| < n\cdot2^{\binom{n}{2}}$ is finite. It is known that a connected, simple graph $G$ of order, finite $n \in \N$ has at least one spanning tree. As $n$ is finite, by Cayley's theorem on the number of distinct spanning trees of a connected graph (see \cite{BM1}), the number of labeled spanning trees of any graph $G\in\mathfrak{G}$ is finite and is equal to $|\mathcal{T}_G| \leq n^{n-2}$. This implies that $|\mathfrak{T}|$ is finite. 
	
Assume that there exists a tree $T\in \mathfrak{T}$ which is not a spanning tree of some $G\in \mathfrak{G}$. Add at least one edge between two non-adjacent vertices of $T$ to obtain the graph $G'$. Clearly, $G'$ is also an unlabeled connected, simple graph and hence $G'\in \mathfrak{G}$, which is a contradiction to the hypothesis and hence we have
$$ T \in \mathcal{T}_{G'} \implies T\in \bigcup\limits_{G'\in \mathfrak{G}}\mathcal{T}_{G'} \implies\mathfrak{T} =\bigcup\limits_{G\in \mathfrak{G}}\mathcal{T}_G.$$

\vspace{-1cm}
	
\end{proof}

The implication of the above theorem is that the sets $\mathfrak{G}$, $\mathcal{T}_G$, $\mathfrak{T}$ are well-defined. Through mathematical induction the theorem can be proven for $n \to \infty$.

\begin{thm}
For any unlabeled tree $T$ of order $n$, we have
\begin{enumerate}\itemsep0mm
\item[(i)] $M^{\varphi_t}_2(T) = \min\{ M^{\varphi_t}_2(G): G \in \mathfrak{G}\}$.
\item[(ii)] $M^{\varphi_t}_3(T) = \min\{ M^{\varphi_t}_3(G): G \in \mathfrak{G}\}$.
\end{enumerate}
\end{thm}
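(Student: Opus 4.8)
The plan is to deduce both minimality statements from a single, uniform lower bound on $M^{\varphi_t}_2$ and $M^{\varphi_t}_3$ that holds for \emph{every} connected simple graph of order $n$, together with the fact that this bound is met with equality by trees. First recall the values on trees: by the theorem on trees of order $n\ge 4$ above (and by the computations among the Preliminary Results for $n\le 3$), every tree $T$ of order $n$ has $M^{\varphi_t}_2(T)=2(n-1)$ and $M^{\varphi_t}_3(T)=n-1$, and these numbers depend on neither $T$ nor the chosen minimum-parameter colouring $\varphi_t$. Since a tree is a connected simple graph we have $\mathfrak{T}\subseteq\mathfrak{G}$, so it suffices to show that $M^{\varphi_t}_2(G)\ge 2(n-1)$ and $M^{\varphi_t}_3(G)\ge n-1$ for every $G\in\mathfrak{G}$ and every minimum-parameter colouring of $G$; the minimum in the statement is then attained, precisely on the trees.

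The key step is an edge-by-edge estimate. Fix $G\in\mathfrak{G}$ with a minimum-parameter proper colouring, and for an edge $uv\in E(G)$ put $c(u)=s$ and $c(v)=k$. Properness gives $s\ne k$, so $s$ and $k$ are distinct positive integers; hence $s\cdot k\ge 1\cdot 2=2$ and $|s-k|\ge 1$. Summing these inequalities over all edges of $G$ yields $M^{\varphi_t}_2(G)\ge 2\,|E(G)|$ and $M^{\varphi_t}_3(G)\ge |E(G)|$, and a connected graph on $n$ vertices satisfies $|E(G)|\ge n-1$; the two required inequalities follow, and equality forces $|E(G)|=n-1$, i.e.\ $G$ is a tree.

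An alternative route is to invoke the Corollary to Theorem~\ref{Thm-2.2}: any $G\in\mathfrak{G}$ contains a spanning tree $T'$ as a subgraph, so $M^{\varphi_t}_2(T')\le M^{\varphi_t}_2(G)$ and $M^{\varphi_t}_3(T')\le M^{\varphi_t}_3(G)$, whence the conclusion via the tree values. I would nonetheless present the direct edge estimate, since it sidesteps having to match the colour-index $t$ between $G$ (whose chromatic number may exceed $2$) and its spanning tree. There is no genuine obstacle here; the only care needed is in reading ``$\min\{M^{\varphi_t}_2(G):G\in\mathfrak{G}\}$'' as a minimum over all order-$n$ graphs \emph{and} all their minimum-parameter colourings --- which is exactly what the uniform edge bound controls --- and in noting that the degenerate case $n=1$ (where $M^{\varphi_t}_3(K_1)=1$ by convention, not $n-1$) is trivial since $\mathfrak{G}=\{K_1\}$.
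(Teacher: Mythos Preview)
Your argument is correct and follows the same route as the paper: both proofs rest on the per-edge estimate that in any proper colouring the two colour indices on an edge are distinct positive integers, so each edge contributes at least $1\cdot 2=2$ to $M^{\varphi_t}_2$ and at least $1$ to $M^{\varphi_t}_3$, with trees attaining these minima on every edge. Your write-up is in fact tighter than the paper's, since you make explicit the second ingredient the paper leaves tacit, namely $|E(G)|\ge n-1$ for connected $G$; without that step the per-edge bound alone does not force the global minimum.
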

\begin{proof}
\textit{Part (i)} For each edge $e$ of $T$, one end vertex is coloured $c_1$ and the other end vertex coloured $c_2$. Hence $\forall e \in E(T)$ the product $1\cdot2$ is constant and minimum over all proper colourings of any  $G \in \mathfrak{G}$. Therefore, for any edge $v_iv_j \in E(G)$, $M^{\varphi_t}_2(G)=\sum \limits_{i=1}^{n-1}\sum\limits_{j=2}^{n} (s\cdot k)=\min\{M^{\varphi_t}_2(G): G\in \mathfrak{G}\}, c(v_i)= s, c(v_j)=k, 1\le t \le \ell\,!$,

\textit{Part (ii):} The proof follows similar to that of Part (i) except now the term $|1-2|=|2-1|$ applies. Hence, for $v_iv_j \in E(G)$, $M^{\varphi_t}_3(G)=\sum\limits_{i=1}^{n-1}\sum\limits_{j=2}^{n} |s-k|=\min\{ M^{\varphi_t}_3(G): G \in \mathfrak{G}\}, c(v_i)= s,\ c(v_j)=k, 1\le t \le \ell\,!$. 
\end{proof}

\begin{defn}{\rm 
A graph $G$ is said to be \textit{chromatically stable}, if the addition of a new edge $e$ to $G$ does not result in a change in the chromatic number of the new graph. That is, $\chi(G+e)=\chi(G)$. If the addition of a new edge changes to $G$ changes the chromatic number, then $G$ is called a  \textit{chromatically unstable} graph.}
\end{defn}
 
Some examples of chromatically unstable graphs are the star graph $K_{1,n}$ and a graph $K_n-e$, $e\in E(K_n)$.  The complete graph $K_n$ is said to be chromatic perfectly stable.  We know that the cycles $C_n,\ n\ge 4$, the star graphs and the complete bipartite graphs $K_{m,n}$ are some of the $2$-chromatic graphs which are chromatically unstable.

The following theorem discusses a necessary and sufficient condition for a $2$-chromatic graph to be a chromatically stable graph.

\begin{thm}\label{Thm-4.1}
A $2$-chromatic graph $G$ is chromatically stable if and only if $G$ is not isomorphic to a complete bipartite graph.
\end{thm}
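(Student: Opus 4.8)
The plan is to establish both directions of the equivalence directly, after fixing a minimum parameter $2$-colouring of $G$ with colour classes $V_1,V_2$; note that $V_1,V_2\neq\emptyset$ because $\chi(G)=2$ forces $E(G)\neq\emptyset$. Everything hinges on one elementary observation, which I would record first: for a $2$-chromatic graph $G$ with classes $V_1,V_2$, either there is a non-edge $uv$ with $u\in V_1$ and $v\in V_2$, or else \emph{every} pair with one end in $V_1$ and one in $V_2$ is an edge of $G$. In the second case $G$ is connected (any two vertices of $V_1$ are joined through any vertex of $V_2$, and symmetrically), so $(V_1,V_2)$ is its only bipartition and $G\cong K_{|V_1|,|V_2|}$; conversely any $K_{m,n}$ is connected with a unique bipartition. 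Thus ``$G$ is complete bipartite'' is precisely the statement ``every $V_1$--$V_2$ pair is an edge'' relative to the colouring we fixed, and this identification is what lets the proof run with a single partition and automatically covers the case of disconnected $G$.

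For the implication ``$G$ not complete bipartite $\Rightarrow$ $G$ chromatically stable'': by the observation there is a non-edge $uv$ with $u\in V_1$, $v\in V_2$, and $(V_1,V_2)$ remains a proper $2$-colouring of $G+uv$, so $\chi(G+uv)=2=\chi(G)$. Hence adding $uv$ leaves the chromatic number unchanged and $G$ is chromatically stable.

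For the converse I would argue the contrapositive, i.e.\ the statement that $G\cong K_{m,n}$ implies $G$ is chromatically unstable. If $m=n=1$ then $G=K_2$ admits no new edge at all. Otherwise $\max(m,n)\ge 2$ and every non-edge $uv$ of $G$ lies inside one class, say $u,v\in V_1$; picking any $w\in V_2$, the triple $\{u,v,w\}$ induces a triangle in $G+uv$, so $\chi(G+uv)\ge 3$ (in fact $=3$, by colouring $V_1\setminus\{u\}$ with $1$, $V_2$ with $2$, $u$ with $3$). In either case no added edge preserves the chromatic number, so $G$ is chromatically unstable.

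I do not expect any real computation: the content is essentially structural. The only delicate point is the bridge between ``$G$ complete bipartite'' and ``$G$ complete with respect to the fixed $2$-colouring'', which is handled by the connectedness remark in the first paragraph (this also disposes of disconnected $G$), together with the purely formal observation that $K_2=K_{1,1}$ fits the statement vacuously, admitting no new edge. Once the first-paragraph observation is in hand, the two displayed implications finish the proof.
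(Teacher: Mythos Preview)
Your proof is correct and follows essentially the same route as the paper's: fix a bipartition $(V_1,V_2)$, note that a cross-partition non-edge can be added while keeping $\chi=2$, and that a within-partition addition forces an odd cycle. You are in fact a little more careful than the paper, which claims a \emph{triangle} must appear (not true for an arbitrary bipartite $G$; it should say ``odd cycle'') and does not isolate the $K_{1,1}$ or disconnected cases, but the underlying argument is the same.
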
 
\begin{proof}
Let $G$ be a chromatically stable graph with $\chi(G)=2$. Hence, $G$ is a bipartite graph. Let $(V_1,V_2)$ be a bipartition of $G$. Let $\varphi: V_1\mapsto c_1$ and $\varphi: V_2\mapsto c_2$. Let $uv$ be an edge not in $E(G)$. If $u$ and $v$ are in the same partition $V_1$ or $V_2$, then $G+uv$ contains a triangle and hence we have $\chi(G+uv)=3$, which contradicts the hypothesis that $G$ is chromatically stable.  Therefore, one end vertex of $uv$ must be in $V_1$ and the other in $V_2$. If $u\in V_1$, $v\in V_2$, then $G+uv$ is also a bipartite graph and hence $\chi(G+uv)=2$. Hence, $G$ is not (isomorphic to) a complete bipartite graph.

If possible, assume that $G$ is chromatically unstable. Then, for any edge $e=uv\notin E(G)$, we have 
$\chi(G+uv)>\chi(G)$. Then, the end vertices of the edge $e$ must be in the same partition. That is, $G$ isomorphic to a complete bipartite graph. 
\end{proof}

The concept of the $\chi$-stability number of a chromatically stable graph can be introduced as follows. 

\begin{defn}{\rm 
The \textit{$\chi$-stability number} of a graph $G$, denoted by $\varrho(G)$, is defined to be the minimum number of edges that can be added to a chromatically stable graph so as to make it a chromatically unstable graph.}
\end{defn}

\begin{prop} 
If $G$ is a $2$-chromatic graph $G$ of size $\epsilon$, we have  $\varrho(G) = \theta(c_1)\theta(c_2)-\epsilon$.
\end{prop}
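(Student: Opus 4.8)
The plan is to use Theorem~\ref{Thm-4.1} to identify exactly which supergraphs of $G$ one should be aiming at, and then to reduce the statement to counting the ``missing'' cross-edges of $G$. Since every graph here is connected and $\chi(G)=2$, the graph $G$ is connected and bipartite, hence has a \emph{unique} bipartition $(V_1,V_2)$; taking the minimum parameter colouring $\varphi:V_1\mapsto c_1$, $V_2\mapsto c_2$ gives $|V_1|=\theta(c_1)$, $|V_2|=\theta(c_2)$, and each of the $\epsilon$ edges of $G$ joins $V_1$ to $V_2$. As $\varrho(G)$ is defined only for chromatically stable $G$, Theorem~\ref{Thm-4.1} also tells us $G\not\cong K_{\theta(c_1),\theta(c_2)}$, so $\epsilon<\theta(c_1)\theta(c_2)$ and the formula is meaningful.

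For the upper bound I would exhibit an explicit optimal edge set: there are exactly $\theta(c_1)\theta(c_2)-\epsilon$ pairs $\{x,y\}$ with $x\in V_1$, $y\in V_2$ and $xy\notin E(G)$, and adding all of them turns $G$ into the complete bipartite graph $K_{\theta(c_1),\theta(c_2)}$, which is chromatically unstable (a complete bipartite graph is chromatically unstable, as already remarked, and also by the ``only if'' direction of Theorem~\ref{Thm-4.1}). Hence $\varrho(G)\le\theta(c_1)\theta(c_2)-\epsilon$. For the lower bound, let $F$ be any edge set with $G+F$ chromatically unstable. If $F$ uses no edge inside $V_1$ or inside $V_2$, then $(V_1,V_2)$ is still a bipartition of the connected graph $G+F$, so $\chi(G+F)=2$, and by Theorem~\ref{Thm-4.1} an unstable $2$-chromatic graph must be complete bipartite; this forces $G+F=K_{\theta(c_1),\theta(c_2)}$, whence $|F|=\theta(c_1)\theta(c_2)-\epsilon$. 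If instead $F$ contains a within-part edge, then $G+F$ carries an odd cycle, so $\chi(G+F)\ge 3$, and one argues that any chromatically unstable graph on the vertex set $V_1\cup V_2$ which still contains the connected spanning bipartite graph $G$ must already have at least $\theta(c_1)\theta(c_2)$ edges, so again $|F|\ge\theta(c_1)\theta(c_2)-\epsilon$. Combining the two bounds gives $\varrho(G)=\theta(c_1)\theta(c_2)-\epsilon$.

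Everything that stays inside the $2$-chromatic regime is routine; the genuine obstacle is the last case of the lower bound, namely showing that introducing a few within-part edges (and thereby raising the chromatic number) cannot produce a cheaper unstable graph. I would handle it by recalling that in any chromatically unstable $H$ there is a non-adjacent pair $u,v$ that receives the same colour in \emph{every} proper $\chi(H)$-colouring, and then exploiting the spanning bipartite skeleton $G\subseteq H$ to force $H$ to contain all $\theta(c_1)\theta(c_2)$ edges between $V_1$ and $V_2$ (in spirit, the complete multipartite ``colour-forcing'' graphs are extremal in edge count, which is what makes $K_{\theta(c_1),\theta(c_2)}$ the cheapest target). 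If one reads the definition of $\varrho(G)$ as permitting only edge additions that keep $G$ $2$-chromatic --- the setting in which Theorem~\ref{Thm-4.1} operates --- then the first case of the lower bound already suffices and no further work is needed.
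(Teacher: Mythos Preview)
Your argument under the ``only cross-edges'' reading is exactly the paper's proof, just stated more carefully: the paper simply observes that $\theta(c_1)\theta(c_2)-\epsilon$ missing cross-edges complete $G$ to $K_{|V_1|,|V_2|}$, invokes Theorem~\ref{Thm-4.1}, and stops, leaving the lower bound implicit. Your Case~1 makes that implicit step explicit and is correct.

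Where you should be more cautious is Case~2. The assertion that any chromatically unstable supergraph of $G$ on $V_1\cup V_2$ must have at least $\theta(c_1)\theta(c_2)$ edges is \emph{false}, and the sketched ``colour-forcing'' heuristic does not rescue it. Take $G=P_{10}$, with bipartition sizes $\theta(c_1)=\theta(c_2)=5$ and $\epsilon=9$, so your formula predicts $\varrho(G)=16$. Add the two within-part edges $v_1v_3$ and $v_2v_4$. The resulting graph contains $K_4-v_1v_4$ on $\{v_1,v_2,v_3,v_4\}$, so it is $3$-chromatic; adding the non-edge $v_1v_4$ produces a $K_4$ and forces $\chi$ up to $4$. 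Thus after only two added edges the graph is already chromatically unstable, with merely $11<25=\theta(c_1)\theta(c_2)$ edges in total. So under the literal definition the proposition itself would fail, and no lower-bound argument of the type you outline can work.

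In short: your instinct in the final sentence is the right one. The paper is tacitly working with the restricted interpretation (edge additions that preserve $\chi(G)=2$), and under that reading your Case~1 alone already gives a complete proof that is, if anything, cleaner than the paper's. Drop the Case~2 paragraph, make the restricted interpretation explicit, and you are done.
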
 
\begin{proof}
Since $G$ is a $2$-chromatic graph, then $G$ is a bipartite graph with bipartition $(V_1, V_2)$ such that $V_1$ is the colour class of $c_1$ and $V_2$ is the colour class of the colour $c_2$. Therefore, $\theta(c_1)=|V_1|$ and $\theta(c_2)=|V_2|$. Clearly, $\theta(c_1)\theta(c_2)-\epsilon$ is the number of edges to be added to $G$ to obtain a complete bipartite graph $K_{|V_1|,|V_2|}$, which is chromatically unstable, by Theorem \ref{Thm-4.1}. Therefore, $\varrho(G) = \theta(c_1)\theta(c_2)-\epsilon$.
\end{proof}

\section{Conclusion}

In this paper, the study of irregularity as defined by the three well known Zagreb indices, but based on a vertex property through colouring has been introduced. Many results may follow from studying other known graphs with well-defined edge set $E(G)$. 

Determining these parameters for various other graph classes offers much scope for further investigations. The relations of chromatic Zagreb indices of various operations, products and powers of graphs with the of chromatic Zagreb indices individual graphs are also worthy for future studies. Another area, we suggest for future studies is to determine chromatic Zagreb indices of certain associated graphs such as the line graphs, total graphs, homeomorphic graphs etc. of  given graphs.

A good understanding of chromatic stable graphs will be worthy for future studies on the corresponding chromatic Zagreb indices. The characterisation of chromatically stable graphs $G$ with a fixed chromatic number $\chi(G)=k>2$ remains open further studies. The relationship between a chromatically stable graph $G$ and the chromatic stability of its complement $\bar{G}$, if exists, are also worthy for future studies.


\begin{thebibliography}{99}
	
\bibitem{ABD1} H. Abdo, S. Brandt,  D. Dimitrov,\textit{The total irregularity of a graph,} Discrete Math. Theor. Computer Sci., \textbf{16}(1)(2014), 201-206.

\bibitem{MOA1} M. O. Alberton, \textit{The irregularity of a graph}, Ars Combin., \textbf{46}, (1997), 219-225.

\bibitem{ADH1} A. R. Ashrafi, T. Do\v{s}li\'{c}, A. Hamzeha, \textit{The Zagreb coindices of graph operations}, Discrete Appl. Math., \textbf{158}(2010), 1571-1578.

\bibitem{BM1} J. A. Bondy, U. S. R. Murty, \textbf{Graph theory with applications}, Macmillan Press, London, (1976).

\bibitem {BLS} A. Brandst\"{a}dt, V. B. Le and J. P. Spinrad, {\bf Graph classes: A survey}, SIAM, Philadelphia, 1999.

\bibitem{CL1} G. Chartrand and L. Lesniak, \textbf{Graphs and digraphs}, CRC Press, 2000.

\bibitem{CZ1} G. Chartrand and P. Zhang, \textbf{Chromatic graph theory}, CRC Press, 2009.

\bibitem{CR1} D. Cvetkovi\'{c} and P. Rowlinson, \textit{On connected graphs with maximal index,} Publ. Inst. Math. (Beograd) (N.S.), \textbf{44}(1988), 29-34.

\bibitem{TD1} T. Do\v{s}li\'{c}, \textit{Vertex-weighted Wiener polynomials for composite graphs}, Ars. Math. Contemp., \textbf{1}(2008), 66-80.

\bibitem{GHF1} G. H. Fath-Tabar, \textit{Old and new Zagreb indices of graphs},  MATCH Commun. Math. Comput. Chem., \textbf{65}(2011), 79-84.

\bibitem{IG1} I. Gutman, \textit{Distance of thorny graphs}, Publ. Inst. Math. (Beograd) (N.S.), \textbf{63}(77)(1998), 31-36.

\bibitem{GT1} I. Gutman, N. Trinajstic, \textit{Graph theory and molecular orbitals, Total $\pi$ electron energy of alternant hydrocarbons}, Chem. Phys. Lett. \textbf{17}(1972), 535–538, DOI:10.1016/0009-2614(72)85099-1.

\bibitem{FH1} F. Harary, {\bf Graph theory}, Narosa Publication, New Delhi, 2001.

\bibitem{HP1} F. Harary and E.M. Palmer, \textit{Graphical enumeration}, Academic Press, New York, 1973.

\bibitem{HS1} F. Harary and A. J. Shwenk, {\em The number of caterpillars}, Discrete Math., {\bf 6}(1973) 359-365.

\bibitem{JT1} T. R. Jensen and B. Toft, {\bf Graph coloring problems}, John Wiley \& Sons, 1995.

\bibitem{MK1}  M. Kubale, {\bf Graph colorings}, American Mathematical Society, 2004.

\bibitem {DBW} D. B. West, {\bf Introduction to graph theory}, Pearson Education Inc., 2001.

\bibitem {BZ1} B. Zhou, \textit{Zagreb indices}, MATCH Commun. Math. Comput. Chem., \textbf{52}(2004), 113–118.

\bibitem{ZG1} B. Zhou, I. Gutman, \textit{Further properties of Zagreb indices}, MATCH Commun. Math. Comput. Chem., \textbf{54}(2005), 233–239.

\end{thebibliography}
\end{document}